\documentclass{article}
\usepackage{amssymb, amsmath, amsthm}
\usepackage{hyperref}
% MARGINS

\textwidth 6.5in
\textheight 9.0in
 \voffset=-0.8in % to account for northern slide in pdf
\hoffset=-0.84in % to account for eastern slide in pdf

% --------------------------------
% Theorem styles
\newtheorem{theorem}{Theorem}    
          
\newtheorem{lem}{Lemma}          
\newtheorem{cnj}{Conjecture} 
% Custom commands
\newcommand{\ir}{\mathcal{I}^{(r)}}
\newcommand{\F}{\mathcal{F}}
\newcommand{\Fsig}{\F_{[\sigma_1 \times \sigma_2]}}

\begin{document}
\title{An EKR Theorem for the Cartesian Product of Complete Graphs}
\author{
Zaphenath Joseph \footnotemark[3]}

\date{}

\maketitle
\footnotetext[3]{
Department of Mathematics \& Statistics, 
Villanova University, 
Villanova, PA, USA, 
\texttt{zjoseph@villanova.edu}
} 
\begin{abstract}
The Erdős--Ko--Rado theorem states that for \( r \leq \frac{n}{2} \), the largest intersecting family of $r$-subsets of $[n]$ is given by fixing a common element in all subsets, which trivially ensures pairwise intersection. We investigate this property for families of independent sets in the Cartesian product of complete graphs, $K_n \times K_m$. Using a novel extension of Katona's cycle method, we prove $K_n \times K_m$ is $r$-EKR when $1 \leq r \leq \frac{\min(m,n)}{2}$, demonstrating the Holroyd--Talbot conjecture holds for this class of well-covered graphs.
\end{abstract}
\section{Introduction}

%Talk about EKR.%

For a positive integer $n$, let $[n] = \{1, 2, \dots, n\}$ and  $\binom{[n]}{r}$ denote the collection of all $r$-element subsets of $[n]$. We say that a family $\F \subseteq \binom{[n]}{r}$ is \emph{intersecting} if $F \cap F' \neq \emptyset$ for any $F, F' \in \F$. The Erd\H{o}s--Ko--Rado (EKR) theorem is a fundamental result in extremal set theory concerning intersecting families. 
\begin{theorem}[Erd\H{o}s, Ko, and Rado \cite{EKR}]
    For positive integers $n$ and $r$, let $r\leq\frac{n}{2}$. If $\F \subseteq \binom{[n]}{r}$ is an intersecting family, then 
    \[
    |\F| \leq \binom{n-1}{r-1}.
    \]
\end{theorem}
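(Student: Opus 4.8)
The plan is to prove the theorem by Katona's cyclic permutation argument — the method the rest of the paper will build on. Consider the $(n-1)!$ ways of arranging $[n]$ around a circle, and call any block of $r$ consecutive elements in such an arrangement an \emph{arc}; each circular arrangement has exactly $n$ arcs. The whole argument hinges on a local statement: for any single circular arrangement $\sigma$, an intersecting family $\F$ can contain at most $r$ of the $n$ arcs of $\sigma$, provided $r \le n/2$.

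To see this local claim, suppose $\F$ contains an arc $A$ of $\sigma$, and relabel the circular positions $1,2,\dots,n$ so that $A = \{1,\dots,r\}$. Let $B_i = \{i, i+1, \dots, i+r-1\}$, indices read modulo $n$, denote the arc beginning at position $i$. The arcs distinct from $A$ that meet $A$ are precisely $B_2,\dots,B_r$ together with $B_{n-r+2},\dots,B_n$ — exactly $2(r-1)$ of them, and these are distinct since $n \ge 2r$. Pair $B_j$ with $B_{j-r}$ for each $j \in \{2,\dots,r\}$; this is a perfect matching on the $2(r-1)$ arcs above, and each matched pair consists of two \emph{disjoint} arcs, because $B_j \cup B_{j-r}$ is a run of $2r \le n$ consecutive positions. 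An intersecting family can contain at most one arc from each such disjoint pair, hence at most $r-1$ arcs meeting $A$ other than $A$ itself; together with $A$, that is at most $r$ arcs of $\sigma$, proving the claim.

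The theorem then follows by double counting the incident pairs $(\sigma, F)$ with $\sigma$ a circular arrangement of $[n]$ and $F \in \F$ an arc of $\sigma$. Summing the local bound over all arrangements shows there are at most $r\,(n-1)!$ such pairs. On the other hand, a fixed $r$-set occurs as an arc in exactly $r!\,(n-r)!$ arrangements: choose the internal order of the $r$ elements forming the block ($r!$ ways), then a circular order of that block together with the remaining $n-r$ elements ($(n-r)!$ ways). Hence $|\F|\cdot r!\,(n-r)! \le r\,(n-1)!$, i.e. $|\F| \le \frac{(n-1)!}{(r-1)!\,(n-r)!} = \binom{n-1}{r-1}$.

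The main obstacle is the local claim, and within it the verification that the shift-by-$r$ matching genuinely pairs \emph{disjoint} arcs; this is exactly where the hypothesis $r \le n/2$ is used, and the case $r = n/2$ is the tight one, since there each matched pair of arcs partitions the entire circle. The two counting identities are routine bookkeeping, and the final ratio collapses to the binomial coefficient with no further work; the slick point is simply that Katona's method localizes the global intersecting condition to one circular arrangement at a time, which is presumably why the paper can extend it to independent sets of $K_n \times K_m$.
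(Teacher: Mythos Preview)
Your proof is correct: it is precisely Katona's cycle method, carried out cleanly and with the right bookkeeping. The paper itself does not prove this classical theorem --- it is stated with a citation to \cite{EKR} and the remark that Katona \cite{Katona} later gave the cycle-method proof --- so there is no in-paper argument to compare against; but your write-up is exactly the argument the paper alludes to and then generalizes in its main theorem, so it is entirely in the intended spirit.
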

For $r<\frac{n}{2}$, equality in the bound is uniquely obtained by \emph{stars}: families where all sets contain a common fixed element. We call $\F$ a \emph{star centered at $x$} if every set in $\F$ contains $x$. This result was first proved using classical shifting techniques, but was later given an elegant alternative proof by Katona \cite{Katona} presenting his cycle method.

%Talk about graph EKR and HT
A natural graph-theoretic extension of the EKR theorem considers families of independent $r$-subsets of the vertex set of a graph. For a graph $G$, let $\ir(G)$ denote the collection of all independent $r$-subsets of $V(G)$, and let $\ir_v(G)$ denote the star centered at $v \in V(G)$. We say $G$ has the \emph{EKR property} (or is \emph{$r$-EKR}) if there exists a vertex $v \in V(G)$ such that for every intersecting family $\F \subseteq \ir(G)$,  $|\F| \leq |\ir_v(G)|.$

Let $\alpha(G)$ denote the size of the largest independent set in $G$ and $\mu(G)$ denote the size of the smallest maximal independent set. The following conjecture was made by Holroyd and Talbot regarding graphs with the EKR-property:

\begin{cnj}[Holroyd and Talbot \cite{HT}]
   A graph $G$ is \emph{r-EKR} for $1\leq r\leq\frac {\mu(G)}{2}.$
\end{cnj}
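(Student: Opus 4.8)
The plan is to attack the conjecture for an arbitrary graph $G$ by a global-to-local averaging argument generalizing Katona's cycle method. In the classical setting one distributes $\binom{[n]}{r}$ over all cyclic arrangements of $[n]$, bounds the number of pairwise-intersecting length-$r$ arcs on a single cycle by $r$ (valid precisely when $2r \le n$), and recovers $\binom{n-1}{r-1}$ by double counting. For a general graph I would first fix a vertex $v$ realizing $\max_{v} |\ir_v(G)|$ and restate the goal as: every intersecting $\F \subseteq \ir(G)$ satisfies $|\F| \le |\ir_v(G)|$. Since $|\ir_v(G)|$ is the number of independent $(r-1)$-sets of $G - N[v]$, i.e.\ $|\mathcal{I}^{(r-1)}(G - N[v])|$, the target inequality compares a family of independent $r$-sets to a punctured count, and the whole method must be engineered to realize that comparison.

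The natural carrier for the averaging is the collection of maximal independent sets of $G$, each traversed cyclically. The crucial observation making this promising is that \emph{every} subset of an independent set is independent, so inside a maximal independent set $M$ the length-$r$ arcs of any cyclic order of $M$ are automatically independent $r$-sets; the classical interval lemma then bounds any intersecting family of such arcs by $r$, provided $2r \le |M|$. Because the shortest maximal independent set has size $\mu(G)$, the hypothesis $r \le \mu(G)/2$ is exactly what keeps this local bound in force on \emph{every} cycle simultaneously. The plan is therefore: (i) show each independent $r$-set of $G$ occurs as an arc in the same number of (maximal independent set, cyclic order) pairs; (ii) apply the interval lemma cycle by cycle; and (iii) average (ii) against (i) to obtain $|\F| \le \frac{r}{\mu(G)}\cdot(\text{total count})$, then verify this matches $|\ir_v(G)|$.

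The main obstacle — and the reason the conjecture is still open in full generality — is step (i). For $\binom{[n]}{r}$ the unique maximal independent set is all of $[n]$ and the symmetric group acts transitively on arcs, so the uniformity in (i) is automatic; for an arbitrary $G$ this symmetry collapses, because the number of maximal independent sets containing a given $r$-set depends delicately on the edge structure and varies from set to set. Consequently the averaging identity in (i) simply fails for graphs with irregular independence structure, and one cannot cleanly convert the per-cycle bound into the star bound. The heart of the difficulty is ruling out intersecting families that are genuinely spread across many maximal independent sets rather than concentrated at a single vertex: it is precisely such configurations that the parameter $\mu(G)$ is meant to control but that no current averaging scheme handles uniformly. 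I therefore expect a complete proof to require a genuinely new ingredient — either a compression operation preserving independence (available only for restricted classes, such as the highly symmetric $K_n \times K_m$ treated here) or a weighted averaging identity compensating for the non-uniform counts in (i) — and absent such an ingredient the scheme above yields the conjecture only for graph families possessing the requisite cyclic symmetry, leaving the statement for arbitrary $G$ unresolved.
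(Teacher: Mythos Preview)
The statement you were asked to prove is a \emph{conjecture}; the paper does not prove it, and indeed it remains open in full generality. What the paper actually establishes is the special case $G = K_n \times K_m$ (its Theorem~3), which it presents as \emph{evidence} for the Holroyd--Talbot conjecture, not a proof of it. There is therefore no ``paper's own proof'' of this statement to compare your attempt against.

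Your write-up is not a proof either, and you are candid about this: you lay out a Katona-style averaging scheme over cyclic orderings of maximal independent sets, and then correctly isolate the point of failure --- step~(i), the uniformity of the arc-count across independent $r$-sets, which breaks down once the graph lacks the transitivity that $[n]$ (or a vertex-transitive product like $K_n \times K_m$) enjoys. That diagnosis is accurate and matches the state of the problem. What you have produced is a competent explanation of \emph{why} the conjecture resists the obvious attack, together with the observation that symmetric families such as $K_n \times K_m$ are exactly where the method survives; this is consonant with the paper's own strategy for its main theorem, which exploits that symmetry via a two-dimensional cycle method. But as a proof of the conjecture for arbitrary $G$, your proposal has a self-acknowledged gap at step~(i) that you do not close, so it does not settle the statement.
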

In \cite{HT}, the authors propose that \emph{well-covered} graphs, i.e., graphs $G$ that satisfy $\alpha(G) = \mu(G)$, form a natural class of potential counterexamples to the conjecture. In the same paper, the authors studied products of graphs. For graphs $G$ and $H$, define the \emph{lexicographic product of $G$ with $H$}, denoted $G[H]$, as the graph with vertex set $V(G)\times V(H)$ having edges $(x,y)(a,b)$ whenever $xa\in E(G)$ or $x=a$ and $yb\in E(H)$. The authors prove a statement regarding the  lexicographic product of a graph with $K_n$, the complete graph on $n$ vertices:
\begin{theorem}
 If a graph $G$ is \emph{r-EKR}, then $G[K_n]$ is \emph{r-EKR}.    
\end{theorem}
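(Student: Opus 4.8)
The plan is to exploit the very rigid structure of independent sets in a lexicographic product with a complete graph, reducing the question for $G[K_n]$ to the $r$-EKR property of $G$ itself plus one elementary lemma about vectors. First I would record the following description of $\ir(G[K_n])$. Since every two distinct vertices of $K_n$ are adjacent, a subset $S$ of $V(G)\times V(K_n)$ is independent in $G[K_n]$ exactly when its projection $B:=\{x:(x,i)\in S\text{ for some }i\}$ to $V(G)$ lies in $\ir(G)$ and no two elements of $S$ share a first coordinate; equivalently, $S$ is the graph of a function $d\colon B\to V(K_n)$ with $B\in\ir(G)$. Hence $\ir(G[K_n])$ is in bijection with pairs $(B,d)$, where $B\in\ir(G)$ and $d\colon B\to[n]$, and two such sets $(B,d)$ and $(B',d')$ meet if and only if $d(x)=d'(x)$ for some $x\in B\cap B'$. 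A direct count then gives $|\ir_{(v,i)}(G[K_n])|=|\ir_v(G)|\cdot n^{r-1}$ for any vertex $v$ of $G$ and any colour $i$, which pins down the target bound.

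Given an intersecting family $\mathcal{G}\subseteq\ir(G[K_n])$, let $v$ be a vertex witnessing that $G$ is $r$-EKR; the goal is $|\mathcal{G}|\le|\ir_v(G)|\,n^{r-1}$. The key step is to slice $\mathcal{G}$ along its projection rather than along colourings: write $\mathcal{B}=\{B\in\ir(G):(B,d)\in\mathcal{G}\text{ for some }d\}$ and, for each $B\in\mathcal{B}$, $\mathcal{D}_B=\{d:(B,d)\in\mathcal{G}\}\subseteq[n]^B$, so that $|\mathcal{G}|=\sum_{B\in\mathcal{B}}|\mathcal{D}_B|$. From the intersection criterion above, $\mathcal{B}$ is an intersecting family of independent $r$-sets of $G$, so $|\mathcal{B}|\le|\ir_v(G)|$ because $G$ is $r$-EKR; and for each fixed $B$, any two members of $\mathcal{D}_B$ agree in at least one coordinate of $B$. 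Thus the whole theorem reduces to the fibre bound $|\mathcal{D}_B|\le n^{r-1}$, and the only real obstacle is proving this without losing a spurious factor of $n$, which is exactly what a naive averaging over colourings of $V(G)$ would do.

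I would isolate the fibre bound as a lemma: if $\mathcal{D}\subseteq[n]^r$ is a family in which any two vectors agree in some coordinate, then $|\mathcal{D}|\le n^{r-1}$. The proof I have in mind is a partition of the cube into ``parallel classes.'' Identify $[n]$ with $\mathbb{Z}/n\mathbb{Z}$; for each offset $\mathbf{c}=(c_2,\dots,c_r)\in(\mathbb{Z}/n\mathbb{Z})^{r-1}$ put $L_{\mathbf{c}}=\{(i,\,i+c_2,\,\dots,\,i+c_r):i\in\mathbb{Z}/n\mathbb{Z}\}$. Each vector of $[n]^r$ lies in exactly one $L_{\mathbf{c}}$ (take $i=d_1$ and $c_j=d_j-d_1$), so the $n^{r-1}$ sets $L_{\mathbf{c}}$ partition $[n]^r$; and any two distinct vectors inside a single $L_{\mathbf{c}}$ differ in \emph{every} coordinate, since each coordinate runs injectively over $\mathbb{Z}/n\mathbb{Z}$. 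An agreeing family therefore contains at most one vector from each $L_{\mathbf{c}}$, giving $|\mathcal{D}|\le n^{r-1}$. With the lemma in hand the proof closes: $|\mathcal{G}|=\sum_{B\in\mathcal{B}}|\mathcal{D}_B|\le|\mathcal{B}|\,n^{r-1}\le|\ir_v(G)|\,n^{r-1}=|\ir_{(v,1)}(G[K_n])|$. It then remains only to dispatch the degenerate cases ($\alpha(G)<r$, or $r=1$) and to verify the bijective description of $\ir(G[K_n])$ with care, since the entire argument rests on it.
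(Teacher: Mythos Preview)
The paper does not actually prove this theorem. It is quoted, with attribution to Holroyd and Talbot \cite{HT}, as background material; the only proof the paper supplies is that of its main result on $K_n\times K_m$. So there is no ``paper's own proof'' of this statement to compare your attempt against.

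For what it is worth, your argument is correct. The structural description of $\ir(G[K_n])$ as pairs $(B,d)$ with $B\in\ir(G)$ and $d\colon B\to[n]$ is exactly right, the projection $\mathcal G\mapsto\mathcal B$ does produce an intersecting family in $\ir(G)$, and your diagonal-partition proof of the fibre lemma (agreeing families in $[n]^r$ have size at most $n^{r-1}$) is clean and standard. The final chain $|\mathcal G|=\sum_{B}|\mathcal D_B|\le|\mathcal B|\,n^{r-1}\le|\ir_v(G)|\,n^{r-1}=|\ir_{(v,1)}(G[K_n])|$ is valid as written. One minor remark: the worry you voice about ``losing a spurious factor of $n$'' never actually threatens your decomposition-by-base approach --- it would only arise if you sliced by colourings of all of $V(G)$ instead --- so you can drop that caveat.
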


Note that the edge condition for the lexicographic product of graphs closely resembles that of the Cartesian product of graphs. The \emph{Cartesian product} $G \times  H$ is the graph with vertex set $V(G) \times V(H)$ and edge set 
\[
E(G \times  H) = \big\{ (g_1,h_1)(g_2,h_2) \;\big|\; (g_1 = g_2 \text{ and } h_1h_2 \in E(H)) \text{ or } (h_1 = h_2 \text{ and } g_1g_2 \in E(G)) \big\}.
\]
Despite being the most well-known graph product, the Cartesian product has seen limited study in the context of EKR properties. 

In his survey paper, Hurlbert \cite{hbert} considers the problem of determining whether the \emph{rook's graph}, $K_n \times K_m$, is $r$-EKR. The rook's graph  merits consideration for two fundamental reasons: it is a well-covered  graph which satisfies the Holroyd-Talbot conjecture, while its symmetric structure offers a tractable setting to examine EKR properties in graph products. 

One can visualize the rook's graph to be a $n$ by $m$ grid, where any two vertices in the same row or column are adjacent. Thus, any two vertices in an independent set must have distinct first coordinates and distinct second coordinates.

Note that 
\[ \big|\ir(K_n \times K_m)\big|= \binom{n}{r}\binom{m}{r} r!,\] 
which implies \[\big|\ir_v(K_n \times K_m)\big| = \binom{n-1}{r-1}\binom{m-1}{r-1} (r-1)!\]
for any $v \in V(K_n \times K_m)$ since the graph is vertex transitive.

\section{Main Result}

\begin{theorem}
For positive integers $ n\leq m$ and for $1\leq r\leq \frac{n}{2}, K_n \times K_m$ is \emph{r-EKR}.
\end{theorem}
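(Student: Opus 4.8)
The plan is to push Katona's cycle method into two dimensions by averaging over \emph{product cyclic orders} on the rook's graph. Identify each member of $\ir(K_n\times K_m)$ with a placement of $r$ non-attacking rooks on the $n\times m$ board, i.e.\ a set of $r$ cells having pairwise distinct rows and pairwise distinct columns. Given a cyclic ordering $\sigma_1$ of the $n$ rows and a cyclic ordering $\sigma_2$ of the $m$ columns, call a set of cells a \emph{diagonal arc for $(\sigma_1,\sigma_2)$} if, after relabelling rows by $\sigma_1$ and columns by $\sigma_2$, it has the form $\{(a+t \bmod n,\; b+t\bmod m) : 0\le t\le r-1\}$ for some starting cell $(a,b)$; since $r\le n\le m$ such a set is automatically independent. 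Let $\mathcal A_{[\sigma_1\times\sigma_2]}$ denote the collection of all diagonal arcs for $(\sigma_1,\sigma_2)$, and put $\Fsig=\F\cap\mathcal A_{[\sigma_1\times\sigma_2]}$.

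The first step is to set up an averaging identity. The group $S_n\times S_m$ acts transitively on $\ir(K_n\times K_m)$ and carries diagonal arcs to diagonal arcs (post-composing the two cyclic orders with the two permutations), and every independent $r$-set is a diagonal arc for at least one pair $(\sigma_1,\sigma_2)$: list its cells in any order, then choose cyclic orders of the rows and of the columns that make the occupied rows, respectively columns, consecutive in that common order. Hence there is a constant $N\ge 1$ such that every independent $r$-set is a diagonal arc for exactly $N$ pairs $(\sigma_1,\sigma_2)$. Summing $|\Fsig|$ over all $(\sigma_1,\sigma_2)$ therefore equals $N|\F|$, and summing $|\ir_v(K_n\times K_m)\cap\mathcal A_{[\sigma_1\times\sigma_2]}|$ over all $(\sigma_1,\sigma_2)$ equals $N\,|\ir_v(K_n\times K_m)|$.

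The second step consists of two local counts for a fixed pair $(\sigma_1,\sigma_2)$. In the relabelled board the cells decompose into $\gcd(n,m)$ \emph{diagonal lines}, each an embedded cycle of length $L=\operatorname{lcm}(n,m)$, and a diagonal arc is exactly a block of $r$ consecutive cells along one such cycle. A fixed vertex $v$ lies on exactly one diagonal line, and exactly $r$ of the length-$r$ cyclic intervals of that cycle contain $v$, so $|\ir_v(K_n\times K_m)\cap\mathcal A_{[\sigma_1\times\sigma_2]}|=r$ for every pair. On the other hand, any two members of $\Fsig$ share a cell, and since each cell lies on a unique diagonal line, all of $\Fsig$ lies on a single diagonal line; that line is a cycle of length $L\ge m\ge n\ge 2r$, so Katona's cycle lemma --- a pairwise-intersecting family of $r$-arcs on a cycle of length at least $2r$ has at most $r$ members --- gives $|\Fsig|\le r$. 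Writing $(n-1)!(m-1)!$ for the number of pairs and combining the two steps,
\[
N|\F|=\sum_{(\sigma_1,\sigma_2)}|\Fsig|\;\le\; r\,(n-1)!(m-1)!\;=\;\sum_{(\sigma_1,\sigma_2)}\bigl|\ir_v(K_n\times K_m)\cap\mathcal A_{[\sigma_1\times\sigma_2]}\bigr|\;=\;N\,|\ir_v(K_n\times K_m)|,
\]
so $|\F|\le|\ir_v(K_n\times K_m)|$; by vertex-transitivity this is exactly the statement that $K_n\times K_m$ is $r$-EKR.

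The main obstacle, and where care is needed, is the torus bookkeeping behind the second step: verifying that on the torus $\mathbb{Z}_n\times\mathbb{Z}_m$ each diagonal line is an embedded cycle of length $\operatorname{lcm}(n,m)$ with no repeated cell, that the diagonal arcs coincide with its cyclic intervals, that two intersecting diagonal arcs are forced onto the same line, and that the star meets $\mathcal A_{[\sigma_1\times\sigma_2]}$ in exactly $r$ arcs --- this last equality is what makes the averaging close with equality, and as a consistency check one can evaluate $N=r!\,(n-r)!\,(m-r)!$ from the identity $\sum_{(\sigma_1,\sigma_2)}|\mathcal A_{[\sigma_1\times\sigma_2]}|=n!\,m!$. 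Once the board has been reorganized into $\gcd(n,m)$ disjoint cycles, the intersecting bound reduces to the classical one-dimensional Katona lemma, and the hypothesis $r\le n/2$ enters only to ensure both that $L\ge 2r$ and that every diagonal arc is an independent set.
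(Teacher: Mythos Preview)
Your argument is correct and shares the paper's overall architecture: both define product cyclic orders $[\sigma_1\times\sigma_2]$, both identify the same ``diagonal arcs'' (the paper calls them $[\sigma_1\times\sigma_2]$-intervals), both prove the key local bound $|\Fsig|\le r$, and both finish with the same double count, observing that each independent $r$-set is an arc for exactly $r!\,(n-r)!\,(m-r)!$ cyclic orders. The genuine difference is in how the local bound is obtained. The paper projects each arc onto its row set via $\pi((a,b))=a$, reducing to cyclic $r$-intervals on the $n$-cycle $\sigma_1$; it then pairs the $2(r-1)$ row-intervals meeting $\pi(A_0)$ into disjoint pairs $(\mathcal{A}_i,\mathcal{A}^i)$ and checks directly that at most one arc from each family can lie in $\Fsig$. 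You instead observe that the torus $\mathbb{Z}_n\times\mathbb{Z}_m$ decomposes under the shift $(a,b)\mapsto(a+1,b+1)$ into $\gcd(n,m)$ diagonal cycles of length $\operatorname{lcm}(n,m)\ge 2r$, note that intersecting arcs must lie on a common cycle, and then invoke the one-dimensional Katona lemma verbatim. Your route is a cleaner reduction to the classical lemma and makes the role of the hypothesis $r\le n/2$ transparent through $\operatorname{lcm}(n,m)\ge n\ge 2r$; the paper's projection argument is more self-contained and avoids any discussion of the orbit structure of the torus. Your additional observation that the star meets $\mathcal{A}_{[\sigma_1\times\sigma_2]}$ in exactly $r$ arcs is not in the paper but is a pleasant way to close the averaging without computing $|\ir_v|$ explicitly.
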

%v_1,v_2...v_n and u_1,u_2...u_m be the vertices for $K_n$ and $K_m$ resepecivley.

\begin{proof}
Let $S_n$ be the symmetric group on $[n]$. Define equivalence relation $\sim$ on $S_n \times S_m$ by:
\[
(\sigma_1,\sigma_2) \sim (\psi_1,\psi_2) \iff \]\[\text{There exists } i\in [n] \text{ and } j\in{[m]} \text{ so that every } x \in [n] \text{ and } y \in [m],\ (\sigma_1(x),\sigma_2(y)) = (\psi_1(x+i), \psi_2(y+j))
\]
where addition is done mod $n$ and $m$ respectively. Call each equivalence class a \textit{cyclic order}. Since each pair of permutations has $nm$ equivalent pairs of permutations, and $|S_n \times S_m| = n!m!$, then  there are $(n-1)!(m-1)!$ cyclic orders. 

For a fixed cyclic order, a set $A \subseteq V(K_n\times K_m)$ is a \textit{$[\sigma_1\times \sigma_2]$-interval} if 
\[ 
A = \{(\sigma_1(i+k), \sigma_2(j+k)) : 0 \leq k \leq r-1\} \text{ for some } i \in [n] \text{ and }j\in[m].
\]
Define the subfamily:
\[
\Fsig = \{F \in \F : F \text{ is a $[\sigma_1\times\sigma_2]$-interval}\}.
\]

\begin{lem} \label{lem1}
    $|\Fsig| \leq r$
\end{lem}
\begin{proof}
Fix a cyclic order $[\sigma_1 \times \sigma_2]$ and let $\pi: V(K_n \times K_m) \to V(K_n)$ be the projection $\pi((a,b)) = a$. Extended $\pi$ to sets by $\pi(A) = \{a : (a,b) \in A\}$. 

Assume $\Fsig \neq \emptyset$ and fix $A_0 \in \Fsig$ starting at $(\sigma_1(x),\sigma_2(y))$:
\[
A_0 = \{(\sigma_1(x+k), \sigma_2(y+k)) : 0 \leq k \leq r-1\}.
\]

For $1 \leq i \leq r-1$, define the following families:
\[
\mathcal{A}^i =\{A \in \ir(K_n\times K_m) : \pi(A) = \{\sigma_1(x+i), \sigma_1(x+i+1), \dots, \sigma_1(x+i+r-1)\}\}
\]
\[
\mathcal{A}_i = \{A \in \ir(K_n\times K_m) : \pi(A) = \{\sigma_1(x+i-r), \sigma_1(x+i-r), \dots, \sigma_1(x+i-1)\}\}.
\]
Notice that for any two independent sets $A$ and $B$, if $A\cap B\neq \emptyset$, then $\pi(A)\cap \pi(B)\neq \emptyset$. So if $B\cap A_0\neq\emptyset$, then $B$ must be a set in some $\mathcal{A}_i $ or $\mathcal{A}^i $ as these are the only sets whose images under $\pi$ intersect with $\pi(A_0)$. But if $A_i\in \mathcal{A}_i$ and $A^i\in \mathcal{A}^i$, then $\pi(A_i)\cap \pi(A^i)=\emptyset$ which means that $A_i\cap A^i=\emptyset$. This gives us that $\Fsig$ can only contain sets from at most one of $\mathcal{A}_i $ or $\mathcal{A}^i$ for each $1\leq i\leq r-1$. The remainder of the proof follows from the fact that any two distinct sets 
in $\mathcal{A}_i$ are disjoint, and similarly for $\mathcal{A}^i$. This implies that $|\Fsig \cap \mathcal{A}_i|\leq 1$ and $|\Fsig \cap \mathcal{A}^i|\leq 1$

\end{proof}
For a fixed set $A \in \F$, $A$ appears as an interval in $r!(n-r)!(m-r)!$ cyclic orders. Combining this with Lemma \ref{lem1} and the fact that there are $(n-1)!(m-1)!$ cyclic orders give us
\[
|\F|  \space r!(n-r)!(m-r)! \leq r  (n-1)!(m-1)! \, ,\]
or equivalently,
\[
|\F| \leq\binom{n-1}{r-1}\binom{m-1}{r-1} (r-1)!. 
\]
\end{proof}

%%%%%%%%%%%%%%%%%%%%%%%%%%%%%%%%%%%%%%%%%%%%%%%
%           BIBLIO
%%%%%%%%%%%%%%%%%%%%%%%%%%%%%%%%%%%%%%%%%%%%%%%

\end{document}